\numberwithin{equation}{section}
\theoremstyle{plain}
\newtheorem{theorem}{Theorem}
\newtheorem{lemma}{Lemma}
\newtheorem{proposition}{Proposition}
\newtheorem{definition}{Definition}
\newtheorem{remark}{Remark}
\newcommand{\Var}{\text{Var}}
\title{Precise deviations for discrete marked Hawkes processes}
\author{
  {Yingli Wang}\,\footnote{School of Mathematics, Shanghai University of Finance and Economics, Shanghai, People's Republic of China; 2022310119@163.sufe.edu.cn}
  \and
  {Ping He}\,\footnote{School of Mathematics, Shanghai University of Finance and Economics, Shanghai, People's Republic of China; pinghe@mail.shufe.edu.cn}
}
\begin{document}
\maketitle

\begin{abstract}
  In this paper, we study precise deviations including precise large deviations and moderate deviations for discrete marked Hawkes processes for large time asymptotics by using mod-$\phi$ convergence theory.
\end{abstract}

\section{Introduction}
\paragraph*{Continuous-time Hawkes processes and their limit theorems}
The Hawkes process is a continuous-time stochastic model that captures temporally clustered, self-exciting stochastic phenomena. It was first introduced by Hawkes \cite{hawkes1971spectra}. In particular, the linear Hawkes process has been well studied and widely used in practice because of its mathematical tractability, especially due to its immigration-birth representation. There are applications in neuroscience (e.g., Johnson \cite{johnson1996point}), DNA modeling (e.g., Gusto and Schbath \cite{gusto2005fado}), finance, and many other fields. Applications of the Hawkes process in finance include market order modeling (e.g., Bauwens and Hautsch \cite{bauwens2009modelling}, Bowsher \cite{bowsher2007modelling}, and Large \cite{large2007measuring}), Value-at-Risk (e.g., Chavez-Demoulin \cite{chavez2005estimating}), and credit risk (e.g., Errais \cite{errais2010affine}).

Let us introduce the Hawkes processes. Let $N$ be a simple point process on $\mathbb R$, and let $\mathcal F_t^{-\infty}:=\sigma(N(C),C\in \mathcal B(\mathbb R), C\subset (-\infty,t])$ be an increasing family of $\sigma$-algebras. Any nonnegative $\mathcal F_t^{-\infty}$-progressively measurable $\lambda_t$ with
\[
  \mathbb E\left[N(a,b]|\mathcal F_a^{-\infty}\right]=\mathbb E\left[ \int_a^b\lambda_sds|\mathcal F_a^{-\infty} \right]
\]
a.s. for all intervals $(a,b]$ is called an $\mathcal F_t^{-\infty}$-intensity of $N$. We use the notation $N_t:=N(0,t]$ to denote the number of points in the interval $(0,t]$.

A general Hawkes process is a simple point process $N$ admitting an $\mathcal F_t^{-\infty}$ intensity
\[
  \lambda_t:=\mu\left( \int_{-\infty}^th(t-s)N(ds) \right),
\]
where $\mu(\cdot):\mathbb R^+\rightarrow\mathbb R^+$ is locally integrable and left-continuous, $h(\cdot):\mathbb R^+\rightarrow\mathbb R^+$, and we always assume that $\|h\|_{L^1}=\int_0^\infty h(t)dt<\infty$. We always assume that $N(-\infty,0]=0$, i.e., the Hawkes process has empty history. In the literature, $h(\cdot)$ and $\mu(\cdot)$ are usually referred to as the exciting function and the rate function, respectively. The Hawkes process is linear if $\mu(\cdot)$ is linear and it is nonlinear otherwise. In the linear case, the stochastic intensity can be written as
\[
  \lambda_t=\nu+\int_0^{t-}h(t-s)N(ds).
\]
Due to the lack of immigration-birth representation and computational tractability, nonlinear Hawkes processes have been much less studied. Nonlinear Hawkes processes were first introduced by Br\'emaud and Massouli\'e \cite{bremaud1996stability}. 

Let us review the limit theorems for linear Hawkes processes in the literature. It is well known that we have the law of large numbers $\frac{N_t}{t}\rightarrow\frac{\nu}{1-\|h\|_{L^1}}$ as $t\rightarrow\infty$. Bacry et al. \cite{BACRY20132475} obtained a functional central limit theorem for multivariate Hawkes process and as a special case of their result,
\begin{equation}\label{clthawkes}
  \frac{N_t-\frac{\nu t}{1-\|h\|_{L^1}}}{\sqrt t}\rightarrow N\left( 0,\frac{\nu}{(1-\|h\|_{L^1})^3} \right),
\end{equation}
in distribution as $t\rightarrow\infty$ under the assumption that $\int_0^\infty t^{1/2}h(t)dt<\infty$. Bordenave and Torrisi \cite{bordenave2007large} proved that $\mathbb P(\frac{N_t}{t}\in \cdot)$ satisfies a large deviation principle with the rate function:
\begin{equation}\label{ldpcontinuoushawkes}
  I(x)=x\log\left( \frac{x}{\nu+x\|h\|_{L^1}} \right)-x+x\|h\|_{L^1}+\nu,
\end{equation}
if $x\ge0$ and $I(x)=+\infty$ otherwise. In \cite{bordenave2007large} the rate function was given in a Legendre-transform form; the explicit expression \eqref{ldpcontinuoushawkes} was first stated in Karabash and Zhu \cite{karabash2015limit}. Moderate deviations for linear Hawkes processes are studied in Zhu \cite{ZHU2013885}. Zhu \cite{zhu2014limit} also studies limit theorems for a CIR process with Hawkes jumps. 

We also briefly survey limit theorems for \emph{marked} Hawkes processes. Karabash and Zhu \cite{karabash2015limit} obtained limit theorems for marked Hawkes processes, including laws of large numbers, central limit theorems, and large deviation principles under suitable conditions, thereby complementing the unmarked results discussed above. In a different asymptotic regime, Li and Pang \cite{li2022functional} considered nonstationary marked Hawkes processes with time-varying mark distributions and a high-intensity baseline. See also Horst and Xu \cite{horst2021functional} for functional limit theorems for marked Hawkes point measures.

For nonlinear Hawkes processes, Zhu \cite{zhu2013nonlinear} provides a complete introduction. Zhu \cite{zhu2013central} studies the central limit theorem, and Zhu \cite{zhu2014process} obtains a level-3 large deviation principle, and hence has the scalar large deviations as a by-product. When the system is Markovian, Zhu \cite{zhu2015large} obtains an alternative expression for the rate function.

The large deviations and moderate deviations for linear Hawkes processes 
are of the Donsker-Varadhan type, which only gives the leading order term. 
In many occasions, more accurate estimates are desired, i.e. the precise 
deviations. Recently, the mod-$\phi$ convergence theory (see 
F\'eray et al. \cite{F_ray_2016}) has emerged as a powerful tool for these problems. This approach was first applied to \emph{continuous-time} linear Hawkes 
processes by Gao and Zhu \cite{gao2021precise} to compute precise large and moderate 
deviations. Building on the established relationship between Hawkes and 
INAR($\infty$) processes (see Kirchner \cite{kirchner2016hawkes}), the theory was subsequently extended to the \emph{discrete-time} setting by Kirchner 
and Torrisi \cite{kirchner2023fluctuations}, who studied fluctuations and 
precise deviations for cumulative INAR time series. The present work 
further extends this line of research to the more general and flexible 
framework of \emph{discrete marked} Hawkes processes. The mod-$\phi$ 
convergence theory shows that if we can characterize the convergence speed 
of the moment generating function and verify the limit corresponds to an 
infinitely divisible distribution, we can obtain the mod-$\phi$ convergence 
which can derive the precise deviations.

\paragraph*{Discrete Hawkes processes}
In practical applications, data are often obtained from discrete-time observations. As a result, some literature has been devoted to studying discrete Hawkes models. Discrete Hawkes processes were first introduced by Seol \cite{seol2015limit}. That paper studies the limit theorems for a discrete-time Hawkes-type model with 0-1 arrivals including the law of large numbers, a central limit theorem and an invariance principle. Wang \cite{wang2022limit} studies a model with Poisson arrivals and marks, whose large and moderate deviations are stated in Wang \cite{wang2023large}.

\paragraph*{Our contributions.}
In this paper, we study precise deviations for discrete marked Hawkes processes via the mod-$\phi$
convergence approach. Our main contributions are summarized as follows:
\begin{itemize}
  \item We establish precise large- and moderate-deviation expansions for $N_t$ as $t\to\infty$,
  with \emph{explicit coefficients} and $\mathcal O(t^{-v})$ remainders. The expansions are uniform for
  $x$ ranging over compact subsets of the admissible regimes (i.e., $x$ bounded away from the
  boundary points where the saddle point approaches the critical value).
  \item New tools: complex-analytic control of the fixed point $x(z)$ up to the branch point $\theta_c>0$, an Abel-type summation tailored to the discrete Hawkes recursion, and a discrete generalized Gr\"onwall's inequality.
\end{itemize}


\section{Main Results}
Before we introduce the discrete model and precise deviation results, let us first recall the definition of mod-$\phi$ convergence in F\'eray et al. \cite{F_ray_2016}. 


\subsection{Mod-\texorpdfstring{$\phi$}{phi} convergence}
Let $(X_n)_{n\ge1}$ be a sequence of real-valued random variables and $\mathbb E[\mathrm{e}^{zX_n}]$ exist in a strip $\mathcal S_{(c,d)}:=\{z\in \mathbb C:c<\mathcal R(z)<d\}$, with $c<0<d$ extended real numbers (we allow $c=-\infty$ and $d=+\infty$). Throughout, $\mathcal R(z)$ denotes the real part of $z\in\mathbb C$. We assume that there exists a non-constant infinitely divisible distribution $\phi$ with $\int_\mathbb R \mathrm{e}^{zx}\phi(dx)=\mathrm{e}^{\eta(z)}$, which is well defined on $\mathcal S_{(c,d)}$, and an analytic function $\psi(z)$ that does not vanish on $\mathcal S_{(c,d)}\cap\mathbb R$ such that locally uniformly in $z\in\mathcal S_{(c,d)}$,
\[
  \mathrm{e}^{-t_n\eta(z)}\mathbb E[\mathrm{e}^{zX_n}]\rightarrow \psi(z),
\]
where $t_n\rightarrow\infty$ as $n\rightarrow\infty$. Then we say that $X_n$ converges mod-$\phi$ on $\mathcal S_{(c,d)}$ with parameters $(t_n)_{n\ge1}$ and limiting function $\psi$. Assume that $\phi$ is a lattice distribution i.e., a distribution with support included in $\gamma+\lambda\mathbb Z$ for some constants $\gamma,\lambda>0$. Also assume that the sequence of random variables $(X_n)_{n\ge1}$ converges mod-$\phi$ at speed $\mathcal O(t_n^{-v})$, that is
\[
  \sup_{z\in K}\left| \mathrm{e}^{-t_n\eta(z)}\mathbb E[\mathrm{e}^{zX_n}]-\psi(z) \right|\le C_Kt_n^{-v},
\]
where $C_K>0$ is some constant, for any compact set $K\subset \mathcal S_{(c,d)}$. Then Theorem 3.2.2 in F\'eray et al. \cite{F_ray_2016} states that for any $x\in\mathbb R$ in the interval $(\eta'(c),\eta'(d))$ such that $t_nx\in\mathbb N$, we have
\[
  \mathbb P(X_n=t_nx)=\frac{\mathrm{e}^{-t_nF(x)}}{\sqrt{2\pi t_n\eta''(\theta^*)}}\left( \psi(\theta^*)+\frac{a_1}{t_n}+\frac{a_2}{t_n^2}+\cdots+\frac{a_{v-1}}{t_n^{v-1}}+O\left( \frac1{t_n^v} \right) \right),
\]
as $n\rightarrow\infty$, where $\theta^*$ is defined via $\eta'(\theta^*)=x$, and $F(x):=\sup_{\theta\in\mathbb R}\{\theta x-\eta(\theta)\}$ is the Legendre transform of $\eta(\cdot)$, and if $x\in(\eta'(0),\eta'(d))$, then as $n\to\infty$,
\[
  \mathbb P(X_n\ge t_nx)=\frac{\mathrm{e}^{-t_nF(x)}}{\sqrt{2\pi t_n\eta''(\theta^*)}}\frac1{1-\mathrm{e}^{-\theta^*}}\left( \psi(\theta^*)+\frac{b_1}{t_n}+\frac{b_2}{t_n^2}+\cdots+\frac{b_{v-1}}{t_n^{v-1}}+O\left( \frac1{t_n^v} \right) \right),
\]
where $(a_k)_{k\ge1}$, $(b_k)_{k\ge1}$ are rational fractions in the derivatives of $\eta$ and $\psi$ at $\theta^*$.

\subsection{The discrete model}
Let $\alpha:\mathbb N\to\mathbb R_+$ be a nonnegative sequence and write $\alpha_t:=\alpha(t)$. The process has an empty history and $X_0=N_0=0$. It is worth mentioning that $\alpha(\cdot)$ is an exponential function in Xu et al. \cite{xu2022self}, and the model proposed in Wang \cite{wang2022limit} is in fact an extension of the model in Xu et al. \cite{xu2022self}. Define $\|\alpha\|_1:=\sum_{t=0}^\infty \alpha_t$ (for convenience, set $\alpha_0=0$) as the $\ell_1$ norm of $\alpha$. We also assume the first moment of the excitation is finite:
\[
    \sum_{i=1}^\infty i\,\alpha_i<\infty.
\]
Conditional on $X_{t-1},X_{t-2},...,X_1$, we define $Z_t$ as a Poisson random variable with mean
\[
  \lambda_t:=\nu+\sum_{s=1}^{t-1}\alpha_sX_{t-s},
\]
and define
\[
    X_t=\sum_{j=1}^{Z_t}l_{t,j},
\]
where $(l_{t,j})_{t\ge1,j\ge1}$ are positive random variables that are i.i.d. in both $t$ and $j$. We assume the subcritical condition $\|\alpha\|_1 \mathbb E[l_{1,1}]<1$ to ensure the stability of the process and a finite mean cluster size.

Finally, we define $N_t:=\sum_{s=1}^tZ_s$. 
The moment generating function can be directly obtained from Wang \cite{wang2022limit}.
\begin{align}\label{nt}
  \mathbb E[\mathrm{e}^{z N_t}]
  =&\exp\left(\nu\left(-t+\sum_{i=0}^{t-1}\mathrm{e}^{f_i(z)}\right)\right),
\end{align}
where
\[
  f_0(z)=z, f_s(z)=z+\log \mathbb E\left[ \exp\left( l_{1,1}\sum_{i=1}^{s}\alpha_i(\mathrm{e}^{f_{s-i}(z)}-1) \right) \right].
\]
Wang \cite{wang2023large} shows that, for all $z$ with $\mathcal R(z)<\theta_c$, where $\theta_c$ is defined in Definition~\ref{def:theta_c}, $f_i(z)\to f_\infty(z)$ locally uniformly in $z$, and $f_\infty(z)$ solves
\[
  f_\infty(z)=z+\log \mathbb{E}\!\left[\exp\!\left(l_{1,1}\,(\mathrm{e}^{f_\infty(z)}-1)\,\|\alpha\|_1\right)\right].
\]
Setting $x(z):=\exp(f_\infty(z))$ then yields the fixed-point equation
\[
  x(z)=\mathbb{E}\!\left[\exp\!\left(z+l_{1,1}\,(x(z)-1)\,\|\alpha\|_1\right)\right].
\]

Let us now introduce several lemmas which will be useful in proving mod-$\phi$ convergences for discrete marked Hawkes processes. 

\begin{lemma}\label{lemma:infinitelydivisible}
  Assume there is a random variable $Y$ such that $\mathbb E[\mathrm{e}^{zY}]=\mathrm{e}^{\eta(z)}=\mathrm{e}^{\nu \left( x(z)-1 \right)}$, then $Y$ has an infinitely divisible distribution.
\end{lemma}

\begin{proof}
  We provide the proof in the Appendix \ref{proofoflemma:infinitelydivisible}.
\end{proof}

\begin{lemma}[Another type of Abel's lemma]\label{Abel}
  Assume $(b_i)_{i\ge1}\in \ell^1$, denote $B_k=\sum_{i=k+1}^\infty b_i,k\ge0$, then
  \[
    \sum_{k=1}^pa_kb_k=a_1B_0+\sum_{k=1}^{p-1}(a_{k+1}-a_k)B_k-a_pB_p,\ p\ge1.
  \]
\end{lemma}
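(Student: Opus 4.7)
The plan is to reduce this identity to a routine summation-by-parts computation, but with the twist that the accumulated quantities are the \emph{tails} $B_k=\sum_{i=k+1}^\infty b_i$ rather than the usual partial sums $\sum_{i=1}^k b_i$. The hypothesis $(b_i)_{i\ge 1}\in l^1$ is exactly what is needed to make each $B_k$ well defined and, more importantly, to guarantee the elementary telescoping identity
\[
  b_k = B_{k-1}-B_k,\qquad k\ge 1.
\]

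Once this is in hand, I would substitute it into the left-hand side to get
\[
  \sum_{k=1}^p a_k b_k \;=\; \sum_{k=1}^p a_k B_{k-1}\;-\;\sum_{k=1}^p a_k B_k.
\]
Then I would shift the summation index in the first sum, replacing $k$ by $k+1$, which separates off the boundary term $a_1 B_0$ (coming from the new index $k=0$) and leaves a sum over $k=1,\ldots,p-1$. From the second sum I peel off the term $a_p B_p$, leaving a sum also over $k=1,\ldots,p-1$. Combining the two middle sums gives $\sum_{k=1}^{p-1}(a_{k+1}-a_k)B_k$, and the three leftover pieces assemble into exactly the claimed right-hand side.

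No real obstacle is anticipated: the proof is essentially a one-line index shift once the telescoping $b_k=B_{k-1}-B_k$ is noted, and the $l^1$ assumption plays only the cosmetic role of ensuring that every $B_k$ is a legitimate finite number. The content of the lemma is simply that partial summation may be carried out using tails instead of prefixes, a reformulation that will later be convenient for controlling the remainder terms in the mod-$\phi$ analysis of $N_t$.
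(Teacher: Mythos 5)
Your proposal is correct and follows essentially the same route as the paper: both proofs write $b_k=B_{k-1}-B_k$, split the sum, shift the index in the first sum to extract $a_1B_0$, peel off $a_pB_p$ from the second, and combine the remaining terms into $\sum_{k=1}^{p-1}(a_{k+1}-a_k)B_k$. No gap to report.
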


\begin{proof}
  We provide the proof in the Appendix \ref{proofoflemma:Abel}.
\end{proof}

\begin{lemma}[A discrete generalized Gr\"onwall's inequality]\label{discretegronwall}
  Let $(q(n))_{n\ge1}$ be a nonnegative $\ell^1$ sequence with $\|q\|_1<1$.
  Let $(p(i))_{i\ge1}$ and $(g(i))_{i\ge1}$ be nonnegative sequences.
  Assume that, for all $i\ge1$,
  \[
    p(i)\le \sum_{j=1}^{i-1} q(i-j)\,p(j) + g(i).
  \]
  Then, for every $i\ge1$,
  \[
    p(i)\le g(i)+\sum_{j=1}^{i-1} Q(i-j)\,g(j),
  \]
  where
  \[
    Q(i)=\sum_{m=1}^\infty q^{*m}(i),\qquad
    q^{*1}(i)=q(i),\qquad
    q^{*(m+1)}(i)=\sum_{k=1}^{i-1} q^{*m}(k)\,q(i-k)\ \ (m\ge1).
  \]
\end{lemma}

\begin{proof}
  We provide the proof in the Appendix \ref{proofoflemma:discretegronwall}.
\end{proof}

We next define the critical value $\theta_c$ that determines the maximal analyticity domain of the fixed point $x(z)$, in a way that does not rely on $x(z)$ itself.

\begin{definition}[Critical value $\theta_c$ via tangency]\label{def:theta_c}
Let $M(u):=\mathbb E[\mathrm{e}^{u\,l_{1,1}}]$.
Define
\[
c_\ast:=\sup\{c\in(0,\infty]:\ \mathbb E[\mathrm{e}^{c\,l_{1,1}}]<\infty\}\in(0,\infty],
\]
so that $M(u)$ is finite (and analytic) on $\{u\in\mathbb C:\ \mathcal R(u)<c_\ast\}$.
For $\theta\in\mathbb R$ and $x\ge 1$, define
\[
G_\theta(x):= \mathrm{e}^\theta\, M\!\left(\|\alpha\|_1(x-1)\right) - x .
\]
Assume $\|\alpha\|_1\mathbb E[l_{1,1}]<1$.  Define
\[
\theta_c:=\sup\left\{\theta\ge0:\ \min_{x>1:\ \|\alpha\|_1(x-1)<c_\ast} G_\theta(x)<0\right\}\in(0,\infty].
\]
\end{definition}

\begin{lemma}[Positivity and characterization of $\theta_c$]\label{lem:thetac-KZstyle}
Under $\|\alpha\|_1\mathbb E[l_{1,1}]<1$, the critical value $\theta_c$ in
Definition~\ref{def:theta_c} satisfies $\theta_c>0$.
Moreover, if $\theta_c<\infty$, there exists $x_c>1$ such that
\[
G_{\theta_c}(x_c)=0,\qquad \partial_x G_{\theta_c}(x_c)=0,
\]
i.e.
\[
x_c=\mathrm{e}^{\theta_c}M\!\left(\|\alpha\|_1(x_c-1)\right),\qquad
1=\mathrm{e}^{\theta_c}\|\alpha\|_1\,\mathbb E\!\left[l_{1,1}\mathrm{e}^{l_{1,1}\|\alpha\|_1(x_c-1)}\right],
\]
and necessarily $\mathcal R(\|\alpha\|_1(x_c-1))<c_\ast$.
\end{lemma}

\begin{proof}
  We provide the proof in the Appendix \ref{proofoflemma:thetac}.
\end{proof}

\begin{remark}
The above definition of $\theta_c$ via the tangency of a convex function
mirrors the continuous-time marked Hawkes analysis in Karabash and Zhu \cite[Section 3]{karabash2015limit},
where the existence of a finite fixed point is characterized by a critical $\theta_c$
determined through $G_{\theta_c}(x_c)=G'_{\theta_c}(x_c)=0$.
\end{remark}

The following lemma is the main result of this paper, which states that the mod-$\phi$ convergence holds for discrete marked Hawkes processes. 

\begin{lemma}[mod-$\phi$ convergence for discrete marked Hawkes processes]\label{modphidiscrete}
  Define $\eta(z):=\nu(x(z)-1)$. For any $z\in\mathbb C$ with $\mathcal R(z)<\theta_c$, where $\theta_c$ is the critical value
  defined in Definition~\ref{def:theta_c} (and characterized by Lemma~\ref{lem:thetac-KZstyle}),
  the series
  \[
    \varphi(z)=\sum_{i=0}^\infty \left(\mathrm{e}^{f_i(z)}-x(z)\right)
  \]
  is well defined and analytic; moreover, as $t\to\infty$,
  \[
    \mathrm{e}^{-t\eta(z)}\mathbb E[\mathrm{e}^{zN_t}]\longrightarrow \psi(z):=\mathrm{e}^{\nu \varphi(z)},
  \]
locally uniformly in $z$. In addition, if $\sum_{i=0}^\infty i^{v+1}\alpha_i<\infty$, then for any compact set $K\subset\{z\in\mathbb C:\mathcal R(z)<\theta_c\}$, there exists some $C_K>0$ such that $\sup_{z\in K}|\mathrm{e}^{-t\eta(z)}\mathbb E[\mathrm{e}^{zN_t}]-\mathrm{e}^{\nu \varphi(z)}|\le C_Kt^{-v}$.
\end{lemma}

\begin{proof}
  We provide the proof in the Appendix \ref{proofoflemma:modphidiscrete}.
\end{proof}

After we establish the mod-$\phi$ convergence, we can now state the precise large deviations and moderate deviations results for discrete marked Hawkes processes.

\begin{theorem}[Precise Large Deviations for Discrete Marked Hawkes Processes]\label{thm:large}
Fix $v\in\mathbb N$. Assume $\|\alpha\|_1\mathbb E[l_{1,1}]<1$ and
\[
  \sum_{i=1}^\infty i^{\,v+1}\alpha_i<\infty,
\]
and assume $c_\ast>0$ (see Definition~\ref{def:theta_c}, i.e., $\exists c>0$ such that $\mathbb{E}[\exp(c\,l_{1,1})]<\infty$).
For each integer $r\ge 0$, define
\[
\mathcal S_r:=\left\{(m_1,\dots,m_r)\in\mathbb N_0^r:\ \sum_{j=1}^r j\,m_j=r\right\},
\]
with the convention $\mathcal S_0:=\{\emptyset\}$ (and empty products equal to $1$).

\begin{enumerate}
\item For any $x\in(0,\eta'(\theta_c))$ with $tx\in\mathbb N$, where $\theta_c$ is defined in
Definition~\ref{def:theta_c}, as $t\to\infty$,
\[
\mathbb P(N_t=tx)=\mathrm{e}^{-tI(x)}\sqrt{\frac{I''(x)}{2\pi t}}
\left(\psi(\theta^*)+\frac{a_1}{t}+\frac{a_2}{t^2}+\cdots+\frac{a_{v-1}}{t^{v-1}}+\mathcal O(t^{-v})\right),
\]
where $I(x)=\sup_{\theta\in\mathbb R}\{\theta x-\eta(\theta)\}$, $\theta^*$ solves $\eta'(\theta^*)=x$,
and $I''(x)=1/\eta''(\theta^*)$.
The coefficients $(a_k)_{k\ge1}$ are rational functions of the derivatives of $\eta$ and $\psi$ at $\theta^*$
given by
\begin{equation}\label{eq:ak-correct}
\begin{aligned}
a_k
=&\sum_{l=0}^{2k}\frac{\psi^{(2k-l)}(\theta^*)}{(2k-l)!}
\sum_{\mathcal S_l}\frac{(-1)^{m_1+\cdots+m_l}}{m_1!1!^{m_1}m_2!2!^{m_2}\cdots m_l!l!^{m_l}}\\
&\cdot \prod_{j=1}^l\left( \frac1{\eta''(\theta^*)}\frac{\eta^{(j+2)}(\theta^*)}{(j+2)(j+1)} \right)^{m_j}
\frac{(-1)^k(2(k+m_1+\cdots+m_l)-1)!!}{(\eta''(\theta^*))^k},
\qquad k\ge1.
\end{aligned}
\end{equation}

\item For any $x\in\left(\eta'(0),\eta'(\theta_c)\right)$ with $tx\in\mathbb N$, i.e.
\[
\eta'(0)=\frac{\nu}{1-\|\alpha\|_1\mathbb E[l_{1,1}]},
\]
as $t\to\infty$,
\[
\mathbb P(N_t\ge tx)=\mathrm{e}^{-tI(x)}\sqrt{\frac{I''(x)}{2\pi t}}\,
\frac1{1-\mathrm{e}^{-\theta^*}}
\left(\psi(\theta^*)+\frac{b_1}{t}+\frac{b_2}{t^2}+\cdots+\frac{b_{v-1}}{t^{v-1}}+\mathcal O(t^{-v})\right),
\]
where $(b_k)_{k\ge1}$ are rational functions of the derivatives of $\eta$ and $\psi$ at $\theta^*$ given by
\begin{equation}\label{eq:bk-correct}
\begin{aligned}
b_k
=&\sum_{n=0}^{2k}\sum_{\mathcal S_n}
\frac{\mathrm{e}^{-(m_1+\cdots+m_n)\theta^*}(m_1+\cdots+m_n)!(1-\mathrm{e}^{-\theta^*})^{-(m_1+\cdots+m_n)-1}}
{m_1!1!^{m_1}m_2!2!^{m_2}\cdots m_n!n!^{m_n}}
\cdot \prod_{j=1}^n (-1)^{j\cdot m_j}\\
&\cdot \sum_{l=0}^{2k-n}\frac{\psi^{(2k-n-l)}(\theta^*)}{(2k-n-l)!}
\sum_{\mathcal S_l}\frac{(-1)^{m_1+\cdots+m_l}}{m_1!1!^{m_1}m_2!2!^{m_2}\cdots m_l!l!^{m_l}}\\
&\cdot\prod_{j=1}^l\left( \frac1{\eta''(\theta^*)}\frac{\eta^{(j+2)}(\theta^*)}{(j+2)(j+1)} \right)^{m_j}
\frac{(-1)^k(2(k+m_1+\cdots+m_l)-1)!!}{(\eta''(\theta^*))^k}.
\end{aligned}
\end{equation}
\end{enumerate}
\end{theorem}

\begin{proof}
  We provide the proof in the Appendix \ref{proofoftheorem:large}.
\end{proof}

\begin{theorem}[Precise Moderate Deviations for Discrete Marked Hawkes Processes]\label{thm:moderate}
Assume that $\|\alpha\|_1 \mathbb E[l_{1,1}]<1$ and that there exists $c>0$ such that
$\mathbb E[\exp(c\,l_{1,1})]<\infty$. Let $I$ be the Legendre transform of $\eta$, i.e.
\[
I(x):=\sup_{\theta\in\mathbb R}\{\theta x-\eta(\theta)\}.
\]
For $i\ge 1$, write $I^{(i)}$ for the $i$-th derivative of $I$ on the interior of its effective domain. 
Let $m\ge 3$. If $y=o\!\left(t^{\frac12-\frac1m}\right)$, then as $t\to\infty$,
\begin{align*}
&\mathbb P\!\left(
N_t\ge
\frac{\nu}{1-\|\alpha\|_1\mathbb E[l_{1,1}]}\,t
+\sqrt t\,\frac{\sqrt{\nu\left(1+\|\alpha\|_1^2\Var(l_{1,1})\right)}}{(1-\|\alpha\|_1\mathbb E[l_{1,1}])^{3/2}}\,y
\right)\\
=&
\frac{1+o(1)}{y\sqrt{2\pi}}
\exp\!\left(
-\frac{y^2}{2}
-\sum_{i=3}^{m-1}\frac{I^{(i)}(\eta'(0))}{i!}\,
\frac{(\eta''(0))^{i/2}y^i}{t^{(i-2)/2}}
\right).
\end{align*}
where
\[
\eta'(0)=\frac{\nu}{1-\|\alpha\|_1\mathbb E[l_{1,1}]},
\qquad
\eta''(0)=\frac{\nu\left(1+\|\alpha\|_1^2\Var(l_{1,1})\right)}
{(1-\|\alpha\|_1\mathbb E[l_{1,1}])^3}.
\]
\end{theorem}

\begin{proof}
We provide the proof in Appendix~\ref{proofoftheorem:moderate}.
\end{proof}

\begin{remark}[On the derivatives of $I$]\label{rem:derivatives-I}
The derivatives $I^{(i)}(\eta'(0))$ in Theorem~\ref{thm:moderate} can be expressed in terms of the
derivatives of $\eta$ at $0$ via the inverse relation $\eta'( \theta(x)) = x$ and repeated
differentiation (e.g.\ using Fa\`a di~Bruno type formulas, see Lemma~\ref{lem:faadibruno}).
In general these expressions do not simplify to closed forms, except in special cases
(e.g.\ deterministic marks), where $\eta$ and hence $I$ may admit more explicit formulas.
\end{remark}

The derivatives of $\eta(\cdot)$ and $\psi(\cdot)$ at $\theta^*$ are important in the precise large
deviations and moderate deviations results. The following proposition gives the formulas for the
derivatives of $\eta(\cdot)$ and $\psi(\cdot)$ at $\theta^*$. For clarity we split the identities into
two groups. Item (1) collects derivatives of the cumulant $\eta$ via the fixed-point map for $x$, while item (2) treats derivatives of the prefactor $\psi$,
which additionally require interchanging derivatives with the locally uniform series
$\sum_i(\mathrm{e}^{f_i}-x)$ established in Lemma~\ref{modphidiscrete} and a Fa\`a di~Bruno expansion.
This separation mirrors their later use: $\eta^{(k)}$ enters the saddle-point variance and
rate-function derivatives, whereas $\psi^{(k)}$ appears only in the correction coefficients.

\begin{proposition}\label{prop:derivatives}
  \begin{enumerate}
    \item
    Recall $\eta(z)=\nu(x(z)-1)$, hence $\eta(\theta^*)=\nu(x(\theta^*)-1)$ and for every $k\ge1$,
    \[
      \eta^{(k)}(\theta^*)=\nu x^{(k)}(\theta^*).
    \]
    Let $A:=\|\alpha\|_1$ and define $u(z):=A(x(z)-1)$.
    Write $x^*:=x(\theta^*)$ and $u^*:=u(\theta^*)=A(x^*-1)$.
    For integers $r\ge0$, set the tilted moments
    \[
      m_r:=\mathbb E\!\left[l_{1,1}^r\exp\!\left(\theta^*+l_{1,1}u^*\right)\right]
      \qquad (r\ge0),
    \]
    so that $m_0=x^*$.
    Then
    \[
      x'(\theta^*)=\frac{x^*}{1-A m_1}.
    \]
    Moreover, for every $k\ge2$,
    \begin{equation}\label{eq:xk-recursion}
      x^{(k)}(\theta^*)
      =\frac{1}{1-A m_1}
      \left[
        x^* + \sum_{n=1}^{k-1}\binom{k}{n}\,\Gamma_n
        + \widetilde\Gamma_k
      \right],
    \end{equation}
    where for each $n\ge1$,
    \begin{equation}\label{eq:Gamma-n-def}
      \Gamma_n
      :=\sum_{\mathcal S_n}
      \frac{n!}{q_1!\,1!^{q_1}\;q_2!\,2!^{q_2}\cdots q_n!\,n!^{q_n}}
      \; m_{q_1+\cdots+q_n}\; A^{q_1+\cdots+q_n}
      \prod_{j=1}^n \left(x^{(j)}(\theta^*)\right)^{q_j},
    \end{equation}
    and
    \begin{equation}\label{eq:Gamma-tilde-k-def}
      \widetilde\Gamma_k
      :=\sum_{\mathcal S_k^{(0)}}
      \frac{k!}{q_1!\,1!^{q_1}\;q_2!\,2!^{q_2}\cdots q_{k-1}!\,(k\!-\!1)!^{q_{k-1}}}
      \; m_{q_1+\cdots+q_{k-1}}\; A^{q_1+\cdots+q_{k-1}}
      \prod_{j=1}^{k-1} \left(x^{(j)}(\theta^*)\right)^{q_j}.
    \end{equation}
    Here $\mathcal S_n$ denotes the set of $n$-tuples of nonnegative integers $(q_1,\ldots,q_n)$ satisfying
    $1q_1+2q_2+\cdots+nq_n=n$, and $\mathcal S_k^{(0)}$ denotes the set of $(k-1)$-tuples
    $(q_1,\ldots,q_{k-1})$ satisfying $1q_1+2q_2+\cdots+(k-1)q_{k-1}=k$ (equivalently, the subset of
    $\mathcal S_k$ with $q_k=0$).

    \item For every $k\ge1$,
    \begin{align}
      \psi^{(k)}(\theta^*)
      = \sum_{\mathcal S_k}
      \frac{k!\,\nu^{\ell_1+\cdots+\ell_k}\,\psi(\theta^*)}
      {\ell_1!\,1!^{\ell_1}\;\ell_2!\,2!^{\ell_2}\cdots \ell_k!\,k!^{\ell_k}}
      \prod_{j=1}^k
      \left(
        \sum_{i=0}^\infty \left( \left(\mathrm{e}^{f_i}\right)^{(j)}(\theta^*) - x^{(j)}(\theta^*) \right)
      \right)^{\ell_j},
    \end{align}
    where
    \[
      \left( \mathrm{e}^{f_i(z)} \right)^{(j)}
      = \sum_{\mathcal S_j}
      \frac{j!\,\mathrm{e}^{f_i(z)}}
      {q_1!\,1!^{q_1}\;q_2!\,2!^{q_2}\cdots q_j!\,j!^{q_j}}
      \prod_{r=1}^j \left( f_i^{(r)}(z) \right)^{q_r},
    \]
    and $\mathcal S_j$ denotes the set of $j$-tuples of nonnegative integers $(q_1,\ldots,q_j)$ satisfying
    $1q_1+2q_2+\cdots+jq_j=j$.
  \end{enumerate}
\end{proposition}

\begin{proof}
  We provide the proof in the Appendix \ref{proofofproposition:moderate}.
\end{proof}


\section*{Acknowledgement(s)}

The authors thank two anonymous referees for helpful suggestions, which greatly improve the quality of the paper. Yingli Wang is supported by the Fundamental Research Funds for the Central Universities in Shanghai University of Finance and Economics CXJJ2023-397.


\section*{Disclosure statement}

The authors declare no conflict of interest.


\bibliographystyle{plain}
\bibliography{bibtex}

\appendix


\section{Proof of Lemma \ref{lemma:infinitelydivisible}}
\label{proofoflemma:infinitelydivisible}

Recall that $x(0)=1$ (indeed $x(0)=\exp(f_\infty(0))=1$). Assume there exists a random variable $Y$
such that
\[
  \mathbb E[\mathrm{e}^{zY}]
  =\exp\!\left(\eta(z)\right)
  =\exp\!\left(\nu\left(x(z)-1\right)\right).
\]
Let $C$ be an $\mathbb N$-valued random variable with moment generating function
\[
  \mathbb E[\mathrm{e}^{zC}]=x(z).
\]
Let $M\sim \mathrm{Poisson}(\nu)$ be independent of $(C_k)_{k\ge1}$, where $C_k$ are i.i.d.\ copies of $C$,
and define the compound Poisson sum $S:=\sum_{k=1}^M C_k$ (with the convention $S=0$ if $M=0$).
Then
\[
  \mathbb E[\mathrm{e}^{zS}]
  =\exp\!\left(\nu\left(\mathbb E[\mathrm{e}^{zC}]-1\right)\right)
  =\exp\!\left(\nu\left(x(z)-1\right)\right)
  =\mathbb E[\mathrm{e}^{zY}].
\]
Therefore $Y$ has a compound Poisson distribution, which is infinitely divisible.


\section{Proof of Lemma \ref{Abel}}
\label{proofoflemma:Abel}
The proof is similar to the proof of the classical Abel's lemma,
  \begin{align*}
    \sum_{k=1}^pa_kb_k
    =&\sum_{k=1}^pa_k(B_{k-1}-B_k)\\
    =&\sum_{k=1}^pa_kB_{k-1}-\sum_{k=1}^pa_kB_k\\
    =&a_1B_0+\sum_{k=1}^{p-1}a_{k+1}B_k-\sum_{k=1}^{p-1}a_kB_k-a_pB_p\\
    =&a_1B_0+\sum_{k=1}^{p-1}(a_{k+1}-a_k)B_k-a_pB_p.
  \end{align*}


\section{Proof of Lemma \ref{discretegronwall}}
\label{proofoflemma:discretegronwall}
We can prove this using a method similar to that of Chu and Metcalf \cite{chu1967gronwall},
  \begin{align}
    p(i)
    \le &\sum_{j=1}^{i-1}q(i-j)\left( \sum_{m=1}^{j-1}q(j-m)p(m)+g(j) \right)+g(i)\\
    =&\sum_{j=1}^{i-1}q(i-j)\sum_{m=1}^{j-1}q(j-m)p(m)+\sum_{j=1}^{i-1}q(i-j)g(j)+g(i)\\
    =&\sum_{j=1}^{i-1}p(j)q^{*2}(i-j)+\sum_{j=1}^{i-1}q^{*1}(i-j)g(j)+g(i).
  \end{align}
  By iterating, for any $n\ge1$ we obtain an inequality of the form
    \[
    p(i)\le g(i)+\sum_{j=1}^{i-1}\left(\sum_{m=1}^{n}q^{*m}(i-j)\right)g(j)+R_n(i),
    \]
    where the remainder $R_n(i)$ is a nonnegative term involving $q^{*(n+1)}$ convolved with $p$.
    Since $q\ge0$ and $\|q\|_1<1$, we have $R_n(i)\le \|q\|_1^{\,n+1}\max_{1\le j\le i}p(j)\to0$ as $n\to\infty$.
    Letting $n\to\infty$ yields
    \[
    p(i)\le g(i)+\sum_{j=1}^{i-1}Q(i-j)g(j),\qquad i\ge1.
    \]


\section{Proof of Lemma \ref{modphidiscrete}}
\label{proofoflemma:modphidiscrete}

Throughout this proof we fix an arbitrary compact set
\[
K\subset\{z\in\mathbb C:\ \mathcal R(z)<\theta_c\},
\qquad
\sigma:=\sup_{z\in K}\mathcal R(z)<\theta_c.
\]
Recall $M(u):=\mathbb E[\mathrm{e}^{l_{1,1}u}]$, which is finite and analytic on $\{u\in\mathbb C:\ \mathcal R(u)<c_\ast\}$.
For convenience write $A:=\|\alpha\|_1$.

\subsection*{Step 1: Exponentiated form of the recursion}

Define
\[
u_i(z):=\sum_{j=1}^i\alpha_j\left(\mathrm{e}^{f_{i-j}(z)}-1\right),
\qquad i\ge 1,
\quad\text{and}\quad u_0(z):=0.
\]
Then the recursion for $f_i$ can be rewritten as
\begin{equation}\label{eq:exp-recursion}
\mathrm{e}^{f_i(z)}=\mathrm{e}^z\,M\left(u_i(z)\right),\qquad i\ge 0,
\end{equation}
because $f_0(z)=z$ gives $\mathrm{e}^{f_0(z)}=\mathrm{e}^z$, and for $i\ge1$,
\[
f_i(z)=z+\log\mathbb E\!\left[\exp\!\left(l_{1,1}u_i(z)\right)\right]
=z+\log M(u_i(z)).
\]
Moreover, a fixed point $x(z)$ (the candidate limit of $\mathrm{e}^{f_i(z)}$) must satisfy
\begin{equation}\label{eq:fixed-point-x}
x(z)=\mathrm{e}^z\,M\left(A(x(z)-1)\right),
\qquad
u_\infty(z):=A(x(z)-1),
\end{equation}
and then
\begin{equation}\label{eq:x-exp-form}
x(z)=\mathrm{e}^z\,M\left(u_\infty(z)\right).
\end{equation}

\subsection*{Step 2: Existence, uniqueness, and analyticity of $x(\cdot)$ on $K$ via a uniform contraction}

Pick a number $\bar\theta$ such that
\[
\sigma<\bar\theta<\theta_c.
\]
Let $\bar x>1$ be the \emph{smallest real solution} to
\[
\bar x=\mathrm{e}^{\bar\theta}M\left(A(\bar x-1)\right),
\qquad
\bar u:=A(\bar x-1).
\]
Since $\bar\theta<\theta_c$, we have
\[
\min_{x>1:\,A(x-1)<c_\ast} G_{\bar\theta}(x)<0,
\qquad
G_{\bar\theta}(x)=\mathrm{e}^{\bar\theta}M(A(x-1))-x,
\]
while $G_{\bar\theta}(1)=\mathrm{e}^{\bar\theta}-1>0$. As $G_{\bar\theta}$ is convex in $x$,
it follows that $G_{\bar\theta}$ has a left zero $\bar x>1$, and at this smallest zero
we must have $\partial_x G_{\bar\theta}(\bar x)<0$, i.e.
\begin{equation}\label{eq:strict-derivative-bartheta}
\mathrm{e}^{\bar\theta}A\,M'(\bar u)<1,
\qquad\text{where}\quad M'(u)=\mathbb E[l_{1,1}\mathrm{e}^{l_{1,1}u}].
\end{equation}
In particular $\bar u<c_\ast$.

For each $z\in K$ define the map (analytic in $(z,x)$ on the domain where $\mathcal R(A(x-1))<c_\ast$)
\[
T_z(x):=\mathrm{e}^z\,M\left(A(x-1)\right).
\]
Let
\[
D:=\{x\in\mathbb C:\ |x|\le \bar x\}.
\]
We claim that for all $z\in K$, $T_z$ maps $D$ into itself and is a strict contraction on $D$ with a constant
independent of $z$.

\medskip
\noindent\emph{(i) $T_z(D)\subset D$.}
For any $z\in K$ and any $x\in D$, using $|\mathrm{e}^{z}|=\mathrm{e}^{\mathcal R(z)}\le \mathrm{e}^\sigma$ and $|M(w)|\le M(\mathcal R(w))$ (since $l_{1,1}>0$),
\[
|T_z(x)|
\le \mathrm{e}^\sigma\,M\left(\mathcal R(A(x-1))\right)
\le \mathrm{e}^\sigma\,M\left(A(|x|-1)\right)
\le \mathrm{e}^\sigma\,M\left(A(\bar x-1)\right)
= \mathrm{e}^{\sigma-\bar\theta}\bar x
\le \bar x,
\]
because $\sigma\le \bar\theta$. Hence $T_z(D)\subset D$.

\medskip
\noindent\emph{(ii) Uniform contraction on $D$.}
For any $x\in D$,
\[
T_z'(x)=\mathrm{e}^z\,A\,M'\left(A(x-1)\right).
\]
Thus, using again $|\mathrm{e}^{z}|\le \mathrm{e}^\sigma$ and $|M'(w)|\le M'(\mathcal R(w))\le M'(\bar u)$ for $\mathcal R(w)\le \bar u$,
\[
\sup_{z\in K}\ \sup_{x\in D}\ |T_z'(x)|
\le \mathrm{e}^\sigma\,A\,M'(\bar u)
\le \mathrm{e}^{\bar\theta}\,A\,M'(\bar u)
<1
\]
by \eqref{eq:strict-derivative-bartheta}. Hence $T_z$ is a strict contraction on $D$ uniformly in $z\in K$.

\medskip
By Banach's fixed-point theorem, for each $z\in K$ there exists a unique $x(z)\in D$ such that
\[
x(z)=T_z(x(z))=\mathrm{e}^z\,M\left(A(x(z)-1)\right),
\]
i.e.\ \eqref{eq:fixed-point-x} holds for all $z\in K$.

\medskip
\noindent\emph{(iii) Analytic dependence of $x(z)$ on $z$ on a neighborhood of $K$.}
Choose any open set $U$ such that $K\subset U\subset\{z:\ \mathcal R(z)\le \sigma'\}$ for some $\sigma'<\bar\theta$.
Repeating the above bounds with $\sigma$ replaced by $\sigma'$ shows that the same $D$ is mapped into itself
and the contraction constant remains $<1$ uniformly for $z\in U$.
Fix an initial analytic function $x_0(z)\equiv 1$ and define iterates
\[
x_{n+1}(z):=T_z(x_n(z)),\qquad n\ge 0.
\]
Each $x_n$ is analytic on $U$ by composition of analytic functions. Moreover, by uniform contraction,
\[
\sup_{z\in U}|x_{n+1}(z)-x_n(z)|
\le L\,\sup_{z\in U}|x_n(z)-x_{n-1}(z)|,
\qquad
L:=\sup_{z\in U}\sup_{x\in D}|T'_z(x)|<1.
\]
Hence $(x_n)$ is a Cauchy sequence in the supremum norm on $U$, so $x_n\to x$ uniformly on $U$.
Uniform limits of analytic functions are analytic (Weierstrass theorem), thus $x(\cdot)$ is analytic on $U$,
and in particular on $K$.
Since $K$ was an arbitrary compact subset of $\{\mathcal R(z)<\theta_c\}$, this yields that $x$ is analytic
on the whole strip $\{\mathcal R(z)<\theta_c\}$.

\subsection*{Step 3: A uniform Lipschitz estimate for $\mathrm{e}^{f_i(z)}-x(z)$ in terms of $u_i(z)-u_\infty(z)$}

Recall (Step~1) that for $i\ge1$
\[
u_i(z):=\sum_{j=1}^i\alpha_j\left(\mathrm{e}^{f_{i-j}(z)}-1\right),
\qquad
u_0(z):=0,
\qquad
A:=\|\alpha\|_1,
\]
and set
\[
u_\infty(z):=A\,(x(z)-1).
\]
From \eqref{eq:exp-recursion} and \eqref{eq:x-exp-form}, for every $i\ge0$,
\begin{equation}\label{eq:diff-exp-M}
\mathrm{e}^{f_i(z)}-x(z)=\mathrm{e}^{z}\left(M(u_i(z))-M(u_\infty(z))\right).
\end{equation}

\medskip
\noindent\textbf{Step 3.1: Uniform bounds on $\mathrm{e}^{f_i(z)}$ and $\mathcal R(u_i(z))$ on $K$.}
We keep the notation $\bar\theta,\bar x,\bar u$ from Step~2:
\[
\sigma<\bar\theta<\theta_c,\qquad
\bar x=\mathrm{e}^{\bar\theta}M(\bar u),\qquad
\bar u:=A(\bar x-1)<c_\ast,
\]
and recall $\sigma=\sup_{z\in K}\mathcal R(z)$.

We claim that for all $i\ge0$ and all $z\in K$,
\begin{equation}\label{eq:uniform-bound-expfi}
\left|\mathrm{e}^{f_i(z)}\right|\le \bar x,
\qquad\text{and}\qquad
\mathcal R\left(u_i(z)\right)\le \bar u.
\end{equation}

We argue by induction on $i$.
For $i=0$, $\mathrm{e}^{f_0(z)}=\mathrm{e}^z$, so
\[
\left|\mathrm{e}^{f_0(z)}\right|=\mathrm{e}^{\mathcal R(z)}\le \mathrm{e}^\sigma
\le \mathrm{e}^{\bar\theta}
\le \mathrm{e}^{\bar\theta}M(\bar u)=\bar x,
\]
since $M(\bar u)\ge 1$. Also $u_0(z)=0\le \bar u$.

Assume \eqref{eq:uniform-bound-expfi} holds for all indices $\le i-1$. Then for $z\in K$,
\[
\mathcal R\left(u_i(z)\right)
=\sum_{j=1}^i \alpha_j\left(\mathcal R(\mathrm{e}^{f_{i-j}(z)})-1\right)
\le \sum_{j=1}^i \alpha_j\left(|\mathrm{e}^{f_{i-j}(z)}|-1\right)
\le \sum_{j=1}^i \alpha_j(\bar x-1)
\le A(\bar x-1)=\bar u.
\]
Using \eqref{eq:exp-recursion} and the positivity of $l_{1,1}$ (so $|M(w)|\le M(\mathcal R(w))$), we get
\[
\left|\mathrm{e}^{f_i(z)}\right|
=\left|\mathrm{e}^z\,M(u_i(z))\right|
\le \mathrm{e}^{\mathcal R(z)}\, M(\mathcal R(u_i(z)))
\le \mathrm{e}^{\sigma}\,M(\bar u)
=\mathrm{e}^{\sigma-\bar\theta}\,\bar x
\le \bar x,
\]
since $\sigma\le \bar\theta$. This closes the induction and proves \eqref{eq:uniform-bound-expfi}. 

\medskip
\noindent\textbf{Step 3.2: The segment between $u_i(z)$ and $u_\infty(z)$ stays in $\{\mathcal R(u)<c_\ast\}$.}
Because $x(z)\in D=\{x:|x|\le \bar x\}$ from Step~2, we have for $z\in K$
\[
\mathcal R\left(u_\infty(z)\right)
=A\left(\mathcal R(x(z))-1\right)
\le A\left(|x(z)|-1\right)
\le A(\bar x-1)=\bar u<c_\ast.
\]
Together with \eqref{eq:uniform-bound-expfi}, for every $i\ge0$ and $z\in K$,
\[
\mathcal R\left(u_\infty(z)+t\left(u_i(z)-u_\infty(z)\right)\right)
\le (1-t)\,\mathcal R(u_\infty(z))+t\,\mathcal R(u_i(z))
\le \bar u<c_\ast,\qquad t\in[0,1].
\]
Hence $M$ is analytic along the whole line segment joining $u_\infty(z)$ and $u_i(z)$.

\medskip
\noindent\textbf{Step 3.3: Mean-value representation and a uniform Lipschitz constant with $C_KA<1$.}
For every $i\ge0$ and $z\in K$, by analyticity of $M$ along the segment we may write
\begin{equation}\label{eq:mean-value-M}
M(u_i(z))-M(u_\infty(z))
=\int_0^1 M'\!\left(u_\infty(z)+t(u_i(z)-u_\infty(z))\right)\,\left(u_i(z)-u_\infty(z)\right)\,dt.
\end{equation}
Moreover, since $l_{1,1}>0$,
\[
\left|M'(u)\right|
=\left|\mathbb E\!\left[l_{1,1}\mathrm{e}^{l_{1,1}u}\right]\right|
\le \mathbb E\!\left[l_{1,1}\mathrm{e}^{l_{1,1}\mathcal R(u)}\right]
= M'(\mathcal R(u)).
\]
Therefore, whenever $\mathcal R(u)\le \bar u$ we have $|M'(u)|\le M'(\bar u)$, and hence by
\eqref{eq:diff-exp-M}--\eqref{eq:mean-value-M} and $|\mathrm{e}^z|\le \mathrm{e}^\sigma$,
\begin{equation}\label{eq:Lip-final}
\left|\mathrm{e}^{f_i(z)}-x(z)\right|
\le \mathrm{e}^{\sigma}\,M'(\bar u)\,\left|u_i(z)-u_\infty(z)\right|,
\qquad i\ge0,\ z\in K.
\end{equation}
Set
\[
C_K:=\mathrm{e}^{\sigma}\,M'(\bar u),
\]
so that \eqref{eq:Lip-final} reads
\[
\left|\mathrm{e}^{f_i(z)}-x(z)\right|\le C_K\,|u_i(z)-u_\infty(z)|,\qquad i\ge0,\ z\in K.
\]
Finally, using \eqref{eq:strict-derivative-bartheta} from Step~2 and $\sigma\le \bar\theta$,
\[
C_KA=\mathrm{e}^{\sigma}A M'(\bar u)\le \mathrm{e}^{\bar\theta}A M'(\bar u)<1.
\]

\subsection*{Step 4: Absolute summability of $\sum_i(\mathrm{e}^{f_i}-x)$ uniformly on $K$ and analyticity of $\varphi$}

We now show
\[
\sum_{i=0}^\infty \sup_{z\in K}|\mathrm{e}^{f_i(z)}-x(z)|<\infty.
\]
Define
\[
p(i):=\sup_{z\in K}|\mathrm{e}^{f_i(z)}-x(z)|,\qquad i\ge 0.
\]
From the definition of $u_i,u_\infty$,
\begin{equation}\label{eq:u-diff-bound}
|u_i(z)-u_\infty(z)|
\le \sum_{j=1}^i\alpha_j|\mathrm{e}^{f_{i-j}(z)}-x(z)| + |x(z)-1|\sum_{j=i+1}^\infty\alpha_j.
\end{equation}
Using \eqref{eq:Lip-final} (which holds for all $i\ge0$) and taking the supremum over $z\in K$ yield, for all $i\ge1$,
\[
p(i)
\le C_K\sum_{j=1}^i\alpha_j\,p(i-j) + C_K\sup_{z\in K}|x(z)-1|\sum_{m=i+1}^\infty\alpha_m.
\]
Write the convolution term by the change of variables $j\mapsto i-j$:
\[
C_K\sum_{j=1}^i\alpha_j\,p(i-j)
= \sum_{j=1}^{i-1} q(i-j)p(j) + C_K\alpha_i\,p(0),
\qquad q(k):=C_K\alpha_k.
\]
Hence we obtain, for all $i\ge1$,
\begin{equation}\label{eq:p-conv-ineq}
p(i)\le \sum_{j=1}^{i-1}q(i-j)p(j)+g(i),
\end{equation}
where
\[
g(i):=C_K\alpha_i\,p(0)+C_K\sup_{z\in K}|x(z)-1|\sum_{m=i+1}^\infty\alpha_m.
\]
Note that
\[
\|q\|_1=\sum_{i=1}^\infty q(i)=C_K\sum_{i=1}^\infty \alpha_i=C_KA<1.
\]
Hence we may apply Lemma~\ref{discretegronwall} to \eqref{eq:p-conv-ineq} and get
\begin{equation}\label{eq:gronwall-solution}
p(i)\le g(i)+\sum_{j=1}^{i-1}Q(i-j)g(j),
\qquad
Q(i):=\sum_{m=1}^\infty q^{*m}(i).
\end{equation}
We first show $\sum_{i\ge 1}g(i)<\infty$. Indeed, by the definition of $g$,
\begin{align*}
\sum_{i=1}^\infty g(i)
&= C_K p(0)\sum_{i=1}^\infty \alpha_i
  + C_K\sup_{z\in K}|x(z)-1|\sum_{i=1}^\infty\sum_{m=i+1}^\infty\alpha_m \\
&= C_K p(0)\,A
  + C_K\sup_{z\in K}|x(z)-1|\sum_{m=2}^\infty (m-1)\alpha_m
<\infty,
\end{align*}
since $A=\sum_{i\ge1}\alpha_i<\infty$ and $\sum_{m\ge1} m\alpha_m<\infty$ by assumption.

Next, we show $\sum_{i\ge 1}Q(i)<\infty$. Since $q$ is nonnegative and $\|q\|_1<1$,
\[
\sum_{i=1}^\infty Q(i)=\sum_{m=1}^\infty \sum_{i=1}^\infty q^{*m}(i)
=\sum_{m=1}^\infty \|q\|_1^m
=\frac{\|q\|_1}{1-\|q\|_1}<\infty.
\]
Therefore, summing \eqref{eq:gronwall-solution} over $i\ge 1$ and using Tonelli's theorem,
\[
\sum_{i=1}^\infty p(i)
\le \sum_{i=1}^\infty g(i) + \sum_{i=1}^\infty\sum_{j=1}^{i-1}Q(i-j)g(j)
= \sum_{i=1}^\infty g(i) + \sum_{j=1}^\infty g(j)\sum_{k=1}^\infty Q(k)
<\infty.
\]
Hence
\[
\sum_{i=0}^\infty \sup_{z\in K}|\mathrm{e}^{f_i(z)}-x(z)|=\sum_{i=0}^\infty p(i)<\infty.
\]

Now define
\[
\varphi(z):=\sum_{i=0}^\infty\left(\mathrm{e}^{f_i(z)}-x(z)\right).
\]
Since $\sum_i \sup_{z\in K}|\mathrm{e}^{f_i(z)}-x(z)|<\infty$, the series converges absolutely and uniformly on $K$.
Each partial sum $\sum_{i=0}^t(\mathrm{e}^{f_i(z)}-x(z))$ is analytic in $z$, hence by Weierstrass theorem,
$\varphi$ is analytic on $K$.
As $K$ was an arbitrary compact subset of $\{\mathcal R(z)<\theta_c\}$, $\varphi$ is analytic on the strip.

Finally, from \eqref{nt} and the definition $\eta(z):=\nu(x(z)-1)$,
\[
\mathrm{e}^{-t\eta(z)}\mathbb E[\mathrm{e}^{zN_t}]
=\exp\!\left(\nu\sum_{i=0}^{t-1}\left(\mathrm{e}^{f_i(z)}-x(z)\right)\right).
\]
Uniform convergence of the partial sums to $\varphi$ on $K$ yields
\[
\mathrm{e}^{-t\eta(z)}\mathbb E[\mathrm{e}^{zN_t}]
\longrightarrow \exp\left(\nu\varphi(z)\right)=:\psi(z)
\]
uniformly on $K$, hence locally uniformly in $z\in\{\mathcal R(z)<\theta_c\}$.
This proves the first part of Lemma~\ref{modphidiscrete}.

\subsection*{Step 5: Speed $\mathcal O(t^{-v})$ under $\sum i^{v+1}\alpha_i<\infty$}

Assume now that $\sum_{i=1}^\infty i^{v+1}\alpha_i<\infty$ for some $v\in\mathbb N$.
We show that for the same compact $K$ there exists $C>0$ such that
\[
\sup_{z\in K}\left|\mathrm{e}^{-t\eta(z)}\mathbb E[\mathrm{e}^{zN_t}]-\mathrm{e}^{\nu\varphi(z)}\right|\le Ct^{-v}.
\]

\medskip
\noindent\textbf{Step 5.1: Tail control reduces to showing $\sum i^v p(i)<\infty$.}
Write
\[
R_t(z):=\sum_{i=t}^\infty \left(\mathrm{e}^{f_i(z)}-x(z)\right).
\]
Then
\[
\mathrm{e}^{-t\eta(z)}\mathbb E[\mathrm{e}^{zN_t}]
=\mathrm{e}^{\nu\varphi(z)}\exp\left(-\nu R_t(z)\right),
\]
so
\[
\left|\mathrm{e}^{-t\eta(z)}\mathbb E[\mathrm{e}^{zN_t}]-\mathrm{e}^{\nu\varphi(z)}\right|
=\mathrm{e}^{\nu\mathcal R(\varphi(z))}\left|\exp\left(-\nu R_t(z)\right)-1\right|.
\]
Using $|\mathrm{e}^w-1|\le \mathrm{e}^{|w|}\,|w|$ for all $w\in\mathbb C$, we get
\begin{equation}\label{eq:exp-minus-1-bound}
\sup_{z\in K}\left|\mathrm{e}^{-t\eta(z)}\mathbb E[\mathrm{e}^{zN_t}]-\mathrm{e}^{\nu\varphi(z)}\right|
\le \left(\sup_{z\in K}\mathrm{e}^{\nu|\varphi(z)|}\right)
\left(\sup_{z\in K}\mathrm{e}^{\nu|R_t(z)|}\right)\,
\nu\sup_{z\in K}|R_t(z)|.
\end{equation}
Since $\sum_{i\ge 0}p(i)<\infty$, we have $\sup_{z\in K}|R_t(z)|\le \sum_{i=t}^\infty p(i)\to 0$,
hence $\sup_{z\in K}\mathrm{e}^{\nu|R_t(z)|}\le \mathrm{e}^{\nu\sum_{i=t}^\infty p(i)}\le \mathrm{e}^{\nu\sum_{i=0}^\infty p(i)}<\infty$.
Therefore, \eqref{eq:exp-minus-1-bound} shows that it suffices to prove
\begin{equation}\label{eq:tail-decay-target}
\sum_{i=t}^\infty p(i)\le C t^{-v}.
\end{equation}
A sufficient condition for \eqref{eq:tail-decay-target} is
\begin{equation}\label{eq:weighted-sum-target}
\sum_{i=1}^\infty i^v p(i)<\infty,
\end{equation}
because then for all $t\ge 1$,
\[
\sum_{i=t}^\infty p(i)=\sum_{i=t}^\infty \frac{i^v}{i^v}p(i)\le t^{-v}\sum_{i=t}^\infty i^v p(i)\le t^{-v}\sum_{i=1}^\infty i^v p(i).
\]
Thus we now prove \eqref{eq:weighted-sum-target}.

\medskip
\noindent\textbf{Step 5.2: Weighted summability of $g$.}
Recall
\[
g(i):=C_K\alpha_i\,p(0)+C_K\sup_{z\in K}|x(z)-1|\sum_{m=i+1}^\infty\alpha_m.
\]
Then, by splitting the two contributions,
\begin{align*}
\sum_{i=1}^\infty i^v g(i)
&\le C_K p(0)\sum_{i=1}^\infty i^v \alpha_i
+ C_K\sup_{z\in K}|x(z)-1|\sum_{i=1}^\infty i^v\sum_{m=i+1}^\infty\alpha_m.
\end{align*}
The first term is finite since $\sum_{i=1}^\infty i^{v+1}\alpha_i<\infty$ implies
$\sum_{i=1}^\infty i^{v}\alpha_i<\infty$.
For the second term, by Tonelli,
\[
\sum_{i=1}^\infty i^v\sum_{m=i+1}^\infty\alpha_m
=\sum_{m=2}^\infty \alpha_m\sum_{i=1}^{m-1}i^v.
\]
Using the bound $\sum_{i=1}^{m-1}i^v\le \int_0^m x^v\,dx=\frac{m^{v+1}}{v+1}$, we get
\[
\sum_{m=2}^\infty \alpha_m\sum_{i=1}^{m-1}i^v
\le \frac1{v+1}\sum_{m=2}^\infty m^{v+1}\alpha_m<\infty.
\]
Hence
\begin{equation}\label{eq:ivg-finite}
\sum_{i=1}^\infty i^v g(i)<\infty.
\end{equation}
In particular, also $\sum_{i\ge 1} g(i)<\infty$.

\medskip
\noindent\textbf{Step 5.3: A fully explicit bound on moments of convolutions $\alpha^{*m}$.}
Define $S_0:=\sum_{i\ge1}\alpha_i=A$ and $S_v:=\sum_{i\ge1} i^v\alpha_i<\infty$ (since $\sum i^{v+1}\alpha_i<\infty$).
Let $\tilde\alpha_i:=\alpha_i/S_0$ be a probability mass function on $\mathbb N$.
Let $X_1,X_2,\dots$ be i.i.d.\ random variables with $\mathbb P(X_1=i)=\tilde\alpha_i$.
Then the $m$-fold convolution satisfies
\[
\alpha^{*m}(n)=S_0^m\,\mathbb P(X_1+\cdots+X_m=n).
\]
Therefore,
\begin{align}
\sum_{n=1}^\infty n^v\,\alpha^{*m}(n)
&=S_0^m\,\mathbb E\left[(X_1+\cdots+X_m)^v\right].\label{eq:moment-as-expectation}
\end{align}

We now prove the deterministic inequality: for any nonnegative reals $a_1,\dots,a_m$ and integer $v\ge1$,
\begin{equation}\label{eq:power-mean-ineq}
(a_1+\cdots+a_m)^v \le m^{v-1}(a_1^v+\cdots+a_m^v).
\end{equation}
\emph{Proof of \eqref{eq:power-mean-ineq}.}
By H\"older's inequality with exponents $v$ and $v/(v-1)$,
\begin{align*}
&a_1+\cdots+a_m = \langle (a_1,\dots,a_m),(1,\dots,1)\rangle\\
&\le \left(\sum_{j=1}^m a_j^v\right)^{1/v}\left(\sum_{j=1}^m 1^{v/(v-1)}\right)^{(v-1)/v}
=\left(\sum_{j=1}^m a_j^v\right)^{1/v} m^{(v-1)/v}.
\end{align*}
Raising both sides to the power $v$ yields \eqref{eq:power-mean-ineq}. \qed

Applying \eqref{eq:power-mean-ineq} to $(a_j)=(X_j)$ and taking expectations gives
\[
\mathbb E\left[(X_1+\cdots+X_m)^v\right]
\le m^{v-1}\sum_{j=1}^m \mathbb E[X_j^v]
= m^{v}\mathbb E[X_1^v].
\]
But $\mathbb E[X_1^v]=\sum_{i\ge1} i^v\tilde\alpha_i=S_v/S_0$. Plugging into \eqref{eq:moment-as-expectation} yields the explicit bound
\begin{equation}\label{eq:conv-moment-bound}
\sum_{n=1}^\infty n^v\,\alpha^{*m}(n)
\le S_0^m\,m^v\,\frac{S_v}{S_0}
= m^v\,S_v\,S_0^{m-1},
\qquad m\ge 1.
\end{equation}

\medskip
\noindent\textbf{Step 5.4: Weighted summability of $Q$.}
Recall $q(i)=C_K\alpha_i$, hence $q^{*m}(n)=C_K^m\,\alpha^{*m}(n)$ and
\[
Q(n)=\sum_{m=1}^\infty q^{*m}(n)=\sum_{m=1}^\infty C_K^m\,\alpha^{*m}(n).
\]
Using Tonelli's theorem and \eqref{eq:conv-moment-bound},
\begin{align*}
\sum_{n=1}^\infty n^v Q(n)
&=\sum_{n=1}^\infty n^v\sum_{m=1}^\infty C_K^m\alpha^{*m}(n)
=\sum_{m=1}^\infty C_K^m\sum_{n=1}^\infty n^v\alpha^{*m}(n)\\
&\le \sum_{m=1}^\infty C_K^m \, m^v\,S_v\,S_0^{m-1}
= S_v\,C_K\sum_{m=1}^\infty m^v\,(C_K S_0)^{m-1}.
\end{align*}
Since $C_KS_0=C_KA<1$, the series $\sum_{m\ge1}m^v r^{m-1}$ converges for $|r|<1$,
hence
\begin{equation}\label{eq:ivQ-finite}
\sum_{n=1}^\infty n^v Q(n)<\infty.
\end{equation}
We also already have $\sum_{n\ge1}Q(n)<\infty$.

\medskip
\noindent\textbf{Step 5.5: Weighted summability of $p$ via the Gr\"onwall representation.}
From \eqref{eq:gronwall-solution},
\[
p(i)\le g(i)+\sum_{j=1}^{i-1}Q(i-j)g(j).
\]
Multiply by $i^v$ and sum over $i\ge 1$:
\[
\sum_{i=1}^\infty i^v p(i)
\le \sum_{i=1}^\infty i^v g(i)
+ \sum_{i=1}^\infty i^v\sum_{j=1}^{i-1}Q(i-j)g(j).
\]
The first term is finite by \eqref{eq:ivg-finite}. For the double sum, set $k=i-j\ge 1$, so $i=j+k$ and
\[
\sum_{i=1}^\infty i^v\sum_{j=1}^{i-1}Q(i-j)g(j)
=\sum_{j=1}^\infty\sum_{k=1}^\infty (j+k)^v\,Q(k)\,g(j).
\]
Use $(j+k)^v\le 2^{v-1}(j^v+k^v)$ for $j,k\ge 1$ to get
\begin{align*}
&\sum_{j=1}^\infty\sum_{k=1}^\infty (j+k)^v\,Q(k)\,g(j)\\
&\le 2^{v-1}\sum_{j=1}^\infty\sum_{k=1}^\infty j^v Q(k)g(j)
+2^{v-1}\sum_{j=1}^\infty\sum_{k=1}^\infty k^v Q(k)g(j)\\
&=2^{v-1}\left(\sum_{j=1}^\infty j^v g(j)\right)\left(\sum_{k=1}^\infty Q(k)\right)
+2^{v-1}\left(\sum_{j=1}^\infty g(j)\right)\left(\sum_{k=1}^\infty k^v Q(k)\right),
\end{align*}
which is finite by \eqref{eq:ivg-finite}, $\sum Q<\infty$, $\sum g<\infty$, and \eqref{eq:ivQ-finite}.
Therefore,
\[
\sum_{i=1}^\infty i^v p(i)<\infty,
\]
which is exactly \eqref{eq:weighted-sum-target}.

\medskip
\noindent\textbf{Step 5.6: Conclude the $\mathcal O(t^{-v})$ speed.}
As shown in Step~5.1,
\[
\sum_{i=t}^\infty p(i)\le t^{-v}\sum_{i=1}^\infty i^v p(i)=:C' t^{-v}.
\]
Hence $\sup_{z\in K}|R_t(z)|\le C' t^{-v}$. Plugging this into \eqref{eq:exp-minus-1-bound} and using boundedness
of $\sup_{z\in K}\mathrm{e}^{\nu|\varphi(z)|}$ and $\sup_{z\in K}\mathrm{e}^{\nu|R_t(z)|}$ yields
\[
\sup_{z\in K}\left|\mathrm{e}^{-t\eta(z)}\mathbb E[\mathrm{e}^{zN_t}]-\mathrm{e}^{\nu\varphi(z)}\right|
\le C_K^{(v)}\,t^{-v}
\]
for some finite constant $C_K^{(v)}>0$ depending only on $K$ and $v$.
This completes the proof of the $\mathcal O(t^{-v})$ speed statement, and hence finishes the proof of Lemma~\ref{modphidiscrete}.


\section{Proof of Lemma \ref{lem:thetac-KZstyle}}\label{proofoflemma:thetac}
First note that $G_0(1)=0$ and
\[
\partial_x G_0(1)=\|\alpha\|_1\,\mathbb E[l_{1,1}] - 1 < 0.
\]
Choose $\varepsilon>0$ such that $\|\alpha\|_1\varepsilon<c_\ast$ (possible since $c_\ast>0$).
By differentiability of $G_0$ at $x=1$, for $\varepsilon$ small enough we have
\[
G_0(1+\varepsilon)
=G_0(1)+\varepsilon\,\partial_xG_0(1)+o(\varepsilon)
<0.
\]
Therefore
\[
\min_{x>1:\ \|\alpha\|_1(x-1)<c_\ast}G_0(x)<0,
\]
and by the definition of $\theta_c$ this implies $\theta_c>0$.

If $\theta_c<\infty$, convexity of $x\mapsto G_\theta(x)$ implies that the threshold
$\min_{x>1}G_{\theta}(x)\uparrow 0$ as $\theta\uparrow\theta_c$, and the minimizer
$x_c$ at $\theta=\theta_c$ satisfies the tangency conditions
$G_{\theta_c}(x_c)=\partial_x G_{\theta_c}(x_c)=0$.
The displayed system follows by direct differentiation.
\section{Proof of Theorem \ref{thm:large}}
\label{proofoftheorem:large}

Before we prove this theorem, let us first introduce the Fa\`{a} di Bruno's formula that will be used repeatedly in our proofs.

\begin{lemma}[Fa\`{a} di Bruno's formula]\label{lem:faadibruno}
\begin{equation}
\frac{d^{n}}{dx^{n}}f(g(x))
=\sum_{\mathcal{S}_{n}}\frac{n!f^{(m_{1}+\cdots+m_{n})}(g(x))}
{m_{1}!1!^{m_{1}}m_{2}!2!^{m_{2}}\cdots m_{n}!n!^{m_{n}}}
\cdot
\prod_{j=1}^{n}(g^{(j)}(x))^{m_{j}},
\end{equation}
where the sum is over the set $\mathcal{S}_{n}$ consisting of
all the $n$-tuples of non-negative integers $(m_{1},\ldots,m_{n})$
satisfying the constraint
$1\cdot m_{1}+2\cdot m_{2}+3\cdot m_{3}+\cdots+n\cdot m_{n}=n$.
\end{lemma}

\begin{proof}[Proof of Theorem \ref{thm:large}]
By Lemma~\ref{lemma:infinitelydivisible} and Lemma~\ref{modphidiscrete}, we have established the mod-$\phi$
convergence of $N_t$ with parameters $t_n=t$, limiting cumulant function $\eta(z)=\nu(x(z)-1)$, and limiting
function $\psi(z)=\mathrm{e}^{\nu\varphi(z)}$, locally uniformly in the strip $\mathcal S_{(-\infty,\theta_c)}$,
and at speed $\mathcal O(t^{-v})$ under the given assumptions.

Since $N_t$ is integer-valued, the lattice assumption required in Theorem~3.2.2 of
F\'eray et al.~\cite{F_ray_2016} holds. Let $I$ be the Legendre transform of $\eta$, i.e.
$I(x)=\sup_{\theta\in\mathbb R}\{\theta x-\eta(\theta)\}$. For $x\in(0,\eta'(\theta_c))$ with
$tx\in\mathbb N$, Theorem~3.2.2 in~\cite{F_ray_2016} yields the precise local expansion
\[
\mathbb P(N_t=tx)=\mathrm{e}^{-tI(x)}\sqrt{\frac{I''(x)}{2\pi t}}
\left(\psi(\theta^*)+\frac{a_1}{t}+\cdots+\frac{a_{v-1}}{t^{v-1}}+\mathcal O(t^{-v})\right),
\]
where $\theta^*$ solves $\eta'(\theta^*)=x$ and $I''(x)=1/\eta''(\theta^*)$.
The explicit coefficients $a_k$ are the standard ones (cf.\ Proposition~1(i) in Gao and Zhu~\cite{gao2021precise})
and are given by \eqref{eq:ak-correct}. The index sets $\mathcal S_l$ appearing in \eqref{eq:ak-correct}
are exactly the Fa\`a di Bruno partition sets for the $l$-th derivative, hence they must satisfy
$\sum_{j=1}^l j m_j=l$.

For the tail probability, for $x\in(\eta'(0),\eta'(\theta_c))$ with $tx\in\mathbb N$,
Theorem~3.2.2 in~\cite{F_ray_2016} gives
\[
\mathbb P(N_t\ge tx)
=\mathrm{e}^{-tI(x)}\sqrt{\frac{I''(x)}{2\pi t}}\,
\frac{1}{1-\mathrm{e}^{-\theta^*}}\,
\left(\psi(\theta^*)+\frac{b_1}{t}+\cdots+\frac{b_{v-1}}{t^{v-1}}+\mathcal O(t^{-v})\right).
\]
with coefficients $b_k$ as in Proposition~1(ii) of Gao and Zhu~\cite{gao2021precise}, written here as
\eqref{eq:bk-correct}. Again, the sets $\mathcal S_n$ and $\mathcal S_l$ in \eqref{eq:bk-correct} are
the standard partition sets for the $n$-th and $l$-th derivatives, so their defining constraint is
$\sum j m_j = n$ and $\sum j m_j=l$, respectively.

Finally, the derivatives of $\eta$ and $\psi$ at $\theta^*$ needed to evaluate these rational functions
can be computed recursively from the fixed-point equation for $x$ and from $\psi(z)=\exp(\nu\varphi(z))$
using Fa\`a di Bruno; see Proposition~\ref{prop:derivatives}.
This completes the proof.
\end{proof}


\section{Proof of Theorem \ref{thm:moderate}}
\label{proofoftheorem:moderate}
By Lemma~\ref{modphidiscrete}, the sequence $(N_t)_{t\ge1}$
satisfies mod-$\phi$ convergence with speed $t$ and cumulant generating function $\eta$.
Therefore, we can apply Corollary~3.3.5 in F\'eray et al. \cite{F_ray_2016}, which yields that for any $m\ge3$ and
$y=o\!\left(t^{\frac12-\frac1m}\right)$,
\begin{align*}
\mathbb P\!\left(N_t\ge t\,\eta'(0)+\sqrt{t\,\eta''(0)}\,y\right)
=
\frac{1+o(1)}{y\sqrt{2\pi}}
\exp\!\left(
-\frac{y^2}{2}
-\sum_{i=3}^{m-1}\frac{I^{(i)}(\eta'(0))}{i!}\,
\frac{(\eta''(0))^{i/2}y^i}{t^{(i-2)/2}}
\right),
\end{align*}
where $I$ is the Legendre transform of $\eta$ and the derivatives $I^{(i)}(\eta'(0))$
are well defined; see Remark~\ref{rem:derivatives-I}.

It remains to compute $\eta'(0)$ and $\eta''(0)$.
Recall that $\eta(z)=\nu(x(z)-1)$, where $x(z)$ is the unique solution near $z=0$ to
\[
x(z)=\mathbb E\!\left[\exp\!\left(z+l_{1,1}(x(z)-1)\|\alpha\|_1\right)\right],
\qquad x(0)=1.
\]
Set $u(z):=\|\alpha\|_1(x(z)-1)$. Differentiating the fixed-point equation once gives
\[
x'(z)=\mathbb E\!\left[\exp\!\left(z+l_{1,1}u(z)\right)\left(1+l_{1,1}\|\alpha\|_1\,x'(z)\right)\right].
\]
Evaluating at $z=0$ (so that $x(0)=1$ and $u(0)=0$) yields
\[
x'(0)=1+\|\alpha\|_1\,\mathbb E[l_{1,1}]\,x'(0),
\qquad\text{hence}\qquad
x'(0)=\frac{1}{1-\|\alpha\|_1\,\mathbb E[l_{1,1}]}.
\]
Therefore,
\[
\eta'(0)=\nu x'(0)=\frac{\nu}{1-\|\alpha\|_1\mathbb E[l_{1,1}]}.
\]

Differentiating the fixed-point equation twice yields
\[
x''(z)=\mathbb E\!\left[\exp\!\left(z+l_{1,1}u(z)\right)\left(\left(1+l_{1,1}\|\alpha\|_1\,x'(z)\right)^2
+l_{1,1}\|\alpha\|_1\,x''(z)\right)\right].
\]
Evaluating again at $z=0$ gives
\[
x''(0)=\mathbb E\!\left[\left(1+l_{1,1}\|\alpha\|_1\,x'(0)\right)^2\right]+\|\alpha\|_1\,\mathbb E[l_{1,1}]\,x''(0),
\]
so
\[
(1-\|\alpha\|_1\,\mathbb E[l_{1,1}])x''(0)=1+2\|\alpha\|_1\,\mathbb E[l_{1,1}]\,x'(0)+\|\alpha\|_1^2\mathbb E[l_{1,1}^2]\,(x'(0))^2.
\]
Substituting $x'(0)=\left(1-\|\alpha\|_1\mathbb E[l_{1,1}]\right)^{-1}$ and simplifying yields
\[
x''(0)=\frac{1+\|\alpha\|_1^2\Var(l_{1,1})}{\left(1-\|\alpha\|_1\,\mathbb E[l_{1,1}]\right)^3}.
\]
Hence
\[
\eta''(0)=\nu x''(0)=\frac{\nu\left(1+\|\alpha\|_1^2\Var(l_{1,1})\right)}
{\left(1-\|\alpha\|_1\mathbb E[l_{1,1}]\right)^3}.
\]
Plugging the above $\eta'(0)$ and $\eta''(0)$ into the general expansion completes the proof.


\section{Proof of Proposition \ref{prop:derivatives}}
\label{proofofproposition:moderate}

\noindent\textbf{Step 1: Derivatives of $\eta$.}
Since $\eta(z)=\nu(x(z)-1)$, we immediately have $\eta(\theta^*)=\nu(x(\theta^*)-1)$ and
$\eta^{(k)}(\theta^*)=\nu x^{(k)}(\theta^*)$ for all $k\ge1$.

\medskip
\noindent\textbf{Step 2: A convenient form of the fixed-point equation.}
Recall that $M(u):=\mathbb E[\mathrm{e}^{l_{1,1}u}]$ and that $x(z)$ solves
\[
  x(z)=\mathrm{e}^z\,M(u(z)),
  \qquad u(z):=A(x(z)-1), \qquad A=\|\alpha\|_1.
\]
For integers $r\ge0$, note that
\[
  M^{(r)}(u)=\mathbb E\!\left[l_{1,1}^r \mathrm{e}^{l_{1,1}u}\right],
  \qquad
  \mathrm{e}^{\theta^*}M^{(r)}(u^*)=\mathbb E\!\left[l_{1,1}^r \mathrm{e}^{\theta^*+l_{1,1}u^*}\right]=:m_r.
\]

\medskip
\noindent\textbf{Step 3: First derivative of $x$.}
Differentiate $x(z)=\mathrm{e}^zM(u(z))$:
\[
  x'(z)=\mathrm{e}^zM(u(z))+\mathrm{e}^zM'(u(z))u'(z)=x(z)+\mathrm{e}^zM'(u(z))\cdot A x'(z).
\]
Hence
\[
  x'(z)\left(1-A\mathrm{e}^zM'(u(z))\right)=x(z),
\]
so at $z=\theta^*$,
\[
  x'(\theta^*)=\frac{x^*}{1-A\mathrm{e}^{\theta^*}M'(u^*)}
  =\frac{x^*}{1-Am_1}.
\]

\medskip
\noindent\textbf{Step 4: Fa\`a di Bruno for $M(u(z))$ at $\theta^*$.}
Fix $n\ge1$. By Fa\`a di Bruno applied to $M\circ u$,
\[
  \frac{d^n}{dz^n}M(u(z))
  =\sum_{\mathcal S_n}
  \frac{n!}{q_1!\,1!^{q_1}\;q_2!\,2!^{q_2}\cdots q_n!\,n!^{q_n}}
  \; M^{(q_1+\cdots+q_n)}(u(z))\;
  \prod_{j=1}^n \left(u^{(j)}(z)\right)^{q_j}.
\]
Since $u^{(j)}(z)=A x^{(j)}(z)$ for all $j\ge1$, multiplying by $\mathrm{e}^{\theta^*}$ and evaluating at $\theta^*$ yields
\[
  \mathrm{e}^{\theta^*}\left.\frac{d^n}{dz^n}M(u(z))\right|_{z=\theta^*}
  =\sum_{\mathcal S_n}
  \frac{n!}{q_1!\,1!^{q_1}\cdots q_n!\,n!^{q_n}}
  \; m_{q_1+\cdots+q_n}\; A^{q_1+\cdots+q_n}
  \prod_{j=1}^n \left(x^{(j)}(\theta^*)\right)^{q_j}
  =:\Gamma_n,
\]
which is exactly \eqref{eq:Gamma-n-def}.
Moreover, for $n=k$ the unique element of $\mathcal S_k$ with $q_k=1$ and $q_1=\cdots=q_{k-1}=0$
produces the term
\[
  m_1\cdot A x^{(k)}(\theta^*).
\]
Therefore,
\begin{equation}\label{eq:Gamma-k-split}
  \Gamma_k = A m_1 x^{(k)}(\theta^*) + \widetilde\Gamma_k,
\end{equation}
where $\widetilde\Gamma_k$ is the contribution of those $(q_1,\ldots,q_k)\in\mathcal S_k$ with $q_k=0$,
which is precisely \eqref{eq:Gamma-tilde-k-def} (reparametrized as a $(k-1)$-tuple).

\medskip
\noindent\textbf{Step 5: Leibniz rule for $x^{(k)}$ and isolation of $x^{(k)}(\theta^*)$.}
For $k\ge2$, apply Leibniz to $x(z)=\mathrm{e}^zM(u(z))$:
\[
  x^{(k)}(z)=\sum_{n=0}^k \binom{k}{n}\, \frac{d^{k-n}}{dz^{k-n}}\mathrm{e}^z\;\cdot\;\frac{d^n}{dz^n}M(u(z))
  =\sum_{n=0}^k \binom{k}{n}\, \mathrm{e}^z\frac{d^n}{dz^n}M(u(z)).
\]
Evaluating at $z=\theta^*$ gives
\[
  x^{(k)}(\theta^*)
  =\sum_{n=0}^k \binom{k}{n}\, \mathrm{e}^{\theta^*}\left.\frac{d^n}{dz^n}M(u(z))\right|_{z=\theta^*}
  =\binom{k}{0}x^*+\sum_{n=1}^{k-1}\binom{k}{n}\Gamma_n+\Gamma_k,
\]
since $\mathrm{e}^{\theta^*}M(u^*)=x^*$.
Using the decomposition \eqref{eq:Gamma-k-split} and rearranging,
\[
  x^{(k)}(\theta^*) - A m_1 x^{(k)}(\theta^*)
  = x^* + \sum_{n=1}^{k-1}\binom{k}{n}\Gamma_n + \widetilde\Gamma_k,
\]
which yields the recursion \eqref{eq:xk-recursion}.

\noindent\textbf{Step 6: Derivatives of $\psi$.}
Recall $\psi(z)=\exp(\nu\varphi(z))$ and
\[
  \varphi(z)=\sum_{i=0}^\infty \left(\mathrm{e}^{f_i(z)}-x(z)\right).
\]

\medskip
\noindent\emph{Justification of term-by-term differentiation.}
In Step~4 of the proof of Lemma~\ref{modphidiscrete}, we established the \emph{normal convergence} of the series:
for every compact set $K\subset\{z\in\mathbb C:\mathcal R(z)<\theta_c\}$,
\[
  \sum_{i=0}^\infty \sup_{z\in K}\left|\mathrm{e}^{f_i(z)}-x(z)\right|<\infty.
\]
Fix such a compact set $K$. Since the domain $\mathcal{D}=\{z:\mathcal R(z)<\theta_c\}$ is open, we can choose $\rho>0$ small enough such that the closed $\rho$-neighborhood
\[
  K_\rho:=\{z\in\mathbb C:\ \mathrm{dist}(z,K)\le \rho\}
\]
is still contained in $\mathcal{D}$.
The functions $g_i(z):=\mathrm{e}^{f_i(z)}-x(z)$ are analytic on $\mathcal{D}$. By Cauchy's estimates, for every $j\ge1$ and any $z\in K$, the $j$-th derivative is bounded by the supremum on the larger set $K_\rho$:
\[
  \left|(\mathrm{e}^{f_i}-x)^{(j)}(z)\right|
  \le \frac{j!}{\rho^j}\sup_{w\in K_\rho}\left|\mathrm{e}^{f_i(w)}-x(w)\right|.
\]
Summing over $i$ and using the normal convergence on the compact set $K_\rho$ yields
\[
  \sum_{i=0}^\infty \sup_{z\in K}\left|(\mathrm{e}^{f_i}-x)^{(j)}(z)\right|
  \le \frac{j!}{\rho^j}\sum_{i=0}^\infty \sup_{w\in K_\rho}\left|\mathrm{e}^{f_i(w)}-x(w)\right|
  <\infty.
\]
This proves that the series of $j$-th derivatives converges uniformly on $K$. Since $K$ is arbitrary, term-by-term differentiation is justified locally uniformly on the whole domain. Thus, for every $j\ge1$,
\[
  \varphi^{(j)}(z)=\sum_{i=0}^\infty \left( (\mathrm{e}^{f_i(z)})^{(j)} - x^{(j)}(z) \right).
\]

\medskip
Applying Fa\`a di Bruno's formula to $\exp(\nu\varphi(z))$ then gives
\[
  \psi^{(k)}(\theta^*)
  =\sum_{\mathcal S_k}
  \frac{k!\,\nu^{\ell_1+\cdots+\ell_k}\,\psi(\theta^*)}
  {\ell_1!\,1!^{\ell_1}\cdots \ell_k!\,k!^{\ell_k}}
  \prod_{j=1}^k \left(\varphi^{(j)}(\theta^*)\right)^{\ell_j}\,
\]
where $\varphi^{(j)}(z)=\sum_{i=0}^\infty\left((\mathrm{e}^{f_i(z)})^{(j)}-x^{(j)}(z)\right)$, and
\[
  \left(\mathrm{e}^{f_i(z)}\right)^{(j)}
  =\sum_{\mathcal S_j}
  \frac{j!\,\mathrm{e}^{f_i(z)}}
  {q_1!\,1!^{q_1}\cdots q_j!\,j!^{q_j}}
  \prod_{r=1}^j \left(f_i^{(r)}(z)\right)^{q_r}\,.
\]
This is exactly the claimed formula in Proposition~\ref{prop:derivatives}(2).
\end{document}